\documentclass[
BCOR2pt, 
captions=nooneline, 
bibliography=totoc, 
numbers=noenddot, 
parskip=half, 
headings=normal, 
abstracton 
]{scrartcl} 


\usepackage[USenglish]{babel} 
\usepackage{graphicx} 
\usepackage{tikz}
\usepackage{framed}
\usepackage{faktor}
\usepackage{floatrow}
\usepackage{remreset} 
\usepackage[nouppercase]{scrpage2} 
\usepackage{amsmath} 
\usepackage{amssymb} 
\usepackage[thmmarks, 
amsmath, 
hyperref 
]{ntheorem} 
\usepackage{bm} 
\usepackage{bbm} 
\usepackage{enumitem} 
\usepackage[hang]{subfigure} 
\usepackage{wrapfig} 
\usepackage{tabularx} 
\usepackage{color}
\usepackage{dcolumn} 
\usepackage{booktabs} 
\usepackage{listings} 
\usepackage{psfrag} 
\usepackage[pdftex, 
setpagesize=false, 
pdfborder={0 0 0}, 
pdfpagemode=UseOutlines, 
]{hyperref} 

\makeatletter
\newcommand\mytoday{\number\year-\ifcase\month\or 01\or 02\or 03\or 04\or 05\or 06\or 07\or 08\or 09\or 10\or 11\or 12\fi-\ifcase\day\or 01\or 02\or 03\or 04\or 05\or 06\or 07\or 08\or 09\or 10\or 11\or 12\or 13\or 14\or 15\or 16\or 17\or 18\or 19\or 20\or 21\or 22\or 23\or 24\or 25\or 26\or 27\or 28\or 29\or 30\or 31\fi} 
\makeatother
\setkomafont{sectioning}{\normalcolor\bfseries} 
\pagestyle{scrheadings} 
\clearscrheadfoot 
\chead[]{\headmark} 
\cfoot[\pagemark]{\pagemark} 
\automark[section]{subsection} 
\setcounter{secnumdepth}{3} 
\setkomafont{captionlabel}{\bfseries} 
\newcolumntype{d}[2]{D{.}{.}{#1.#2}} 
\setlength{\heavyrulewidth}{0.4pt} 
\setlength{\lightrulewidth}{0.4pt} 
\setlength{\cmidrulewidth}{0.4pt} 
\belowbottomsep=-4pt 
\newcommand*{\abstractnoindent}{} 
\let\abstractnoindent\abstract
\renewcommand*{\abstract}{\let\quotation\quote\let\endquotation\endquote
\abstractnoindent}
\lstloadlanguages{[GNU]C++} 
\lstset{
language=[GNU]C++, 
numbers=left, 
numberstyle=\tiny, 
numbersep=8pt, 
tabsize=2, 
showstringspaces=false, 
basicstyle=\footnotesize\ttfamily, 
keywordstyle=, 
commentstyle=, 
breaklines=true, 
captionpos=b 
}
\makeatletter
\renewcommand{\p@enumii}[1]{\theenumi(#1)}
\makeatother
\makeatletter

\makeatother

\theoremstyle{break} 
\theoremheaderfont{\bfseries}
\theorembodyfont{}
\theoremseparator{}
\newtheorem{definition}{Definition}[section] 
\newtheorem{lemma}[definition]{Lemma}

\newtheorem{remark}[definition]{Remark}
\newtheorem{example}[definition]{Example}

\theoremstyle{nonumberbreak} 
\theoremsymbol{$\Box$}
\newtheorem{proof}{Proof}

\newcommand*{\IP}{\mathbb{P}}
\newcommand*{\I}{\mathbb{I}}
\newcommand*{\IE}{\mathbb{E}}
\newcommand*{\IR}{\mathbb{R}}

\newcommand*{\IN}{\mathbb{N}}

\newcommand*{\IQ}{\mathbb{Q}}
\newcommand*{\F}{\mathcal{F}}

\usetikzlibrary{snakes}
\usetikzlibrary{arrows,shapes,positioning}
\usetikzlibrary{decorations.markings}
\tikzstyle arrowstyle=[scale=2]
\tikzstyle directed=[postaction={decorate,decoration={markings,
    mark=at position 1 with {\arrow[arrowstyle]{stealth}}}}]

\begin{document}

\renewcommand{\figurename}{Fig.}

\thispagestyle{plain}
	\begin{center}
		{\bfseries\Large On the structure of exchangeable extreme-value copulas}
		\par
		\vspace{1cm}
		{\Large Jan-Frederik Mai and Matthias Scherer \footnote{Technical University of Munich, Department of Mathematics, Chair of Mathematical Finance, Parkring 11, 85748 Garching--Hochbr\"uck, email: \texttt{mai@tum.de} and \texttt{scherer@tum.de}}}
		\vspace{0.2cm}
		\\
	\end{center}
	\begin{center}
Version of \today.
\end{center}
\begin{abstract}
We show that the set of $d$-variate symmetric stable tail dependence functions, uniquely associated with exchangeable $d$-dimensional extreme-value copulas, is a simplex and determine its extremal boundary. The subset of elements which arises as $d$-margins of the set of $(d+k)$-variate symmetric stable tail dependence functions is shown to be proper for arbitrary $k \geq 1$. Finally, we derive an intuitive and useful necessary condition for a bivariate extreme-value copula to arise as bi-margin of an exchangeable extreme-value copula of arbitrarily large dimension, and thus to be conditionally iid.
\end{abstract}
\textbf{Keywords:} extreme-value copula; stable tail dependence function; extendibility; exchangeability; conditionally iid.

\section{Introduction}
The problem of determining whether a given exchangeable probability law on $\IR^d$ can arise as a $d$-dimensional margin of some exchangeable probability law on $\IR^{d+k}$, $k \geq 1$, is known as the extendibility problem in the literature. If there is a solution to the extendibility problem for arbitrary $k \geq 1$, one calls the probability law \emph{(infinitely) extendible}. In the general case, that is without postulating any additional conditions on the involved probability distributions, \cite{konstantopoulos19} derive an analytical criterion to check for extendibility, although this criterion is difficult to apply in concrete cases. Analytical solutions of the infinite extendibility problem have natural connections with Harmonic Analysis, rendering the topic interesting for theorists. But the problem is also interesting for applied probabilists, since infinitely extendible models can be used as flexible dependence models that are still very convenient to work with in large dimensions. The most famous solutions to the infinite extendibility problem for specific families of distributions comprise $\ell_2$-norm symmetric laws (Schoenberg's Theorem), $\ell_1$-norm symmetric laws associated with Archimedean copulas (see \cite{neilnev09}), and $\ell_{\infty}$-norm symmetric laws (see \cite{gnedin95}), see also \cite{rachev91} for a nice wrapping of these three popular cases and a generalization to $\ell_{p}$-norm symmetric laws for arbitrary $p \in (0,\infty]$. More recently, the infinite extendibility problem has also been solved for popular families of multivariate exponential and geometric distributions, see \cite{mai13,maischerer13brazil}, and has also been dealt with for extreme-value distributions in \cite{maischerer13,mai18b}. Recall further that the seminal De Finetti Theorem implies that the notions ``infinitely extendible'' and ``conditionally iid'' coincide for exchangeable probability laws, see \cite{definetti37}.
\par
The present article may be seen as a continuation of the work in \cite{maischerer13,mai18b} and deals with the extendibility problem for exchangeable extreme-value copulas. More specifically, we investigate which exchangeable $d$-variate extreme-value copulas arise as $d$-margins of some $(d+k)$-variate exchangeable extreme-value copula, and which do not. Recall that an extreme-value copula $C:[0,1]^d \rightarrow [0,1]$ is (the restriction to $[0,1]^d$ of) a distribution function with one-dimensional margins that are uniform on $[0,1]$, and which satisfies the extreme-value property $C(u^t_1,\ldots,u^t_d)=C(u_1,\ldots,u_d)^t$ for all $u_1,\ldots,u_d\in[0,1]$ and $t\geq 0$. The extreme-value property analytically characterizes multivariate distribution functions that can arise as limits of appropriately normalized componentwise maxima/minima of independent and identically distributed random vectors, see \cite{resnick87} for a textbook account on multivariate extreme-value theory. The restriction to uniform one-dimensional margins, i.e.\ to copulas, instead of arbitrary extreme-value distribution functions is without loss of generality, since by virtue of Sklar's Theorem we can write an arbitrary distribution function of a $d$-variate extreme-value distribution as $F(x_1,\ldots,x_d):= C\big( F_1(x_1),\ldots,\,F(x_d)\big)$, where $C:[0,1]^d \rightarrow [0,1]$ is an extreme-value copula and $F_1,\ldots,F_d$ denote the one-dimensional margins, which are necessarily one-dimensional extreme-value distribution functions. We refer to \cite{durante2015principles, joe2014dependence} for general background on copulas, and to \cite{joe97} for a book on copulas with a specific emphasis on extreme-value copulas. 
\par
Regarding the important special case of infinite extendibility, we say that a $d$-variate exchangeable extreme-value copula $C$ is infinitely extendible if it arises as $d$-margin of some $(d+k)$-dimensional exchangeable extreme-value copula for arbitrary $k \geq 1$. \cite{maischerer13} show that this is the case if and only if there is a right-continuous, non-decreasing stochastic process $H=\{H_t\}_{t \geq 0}$ satisfying $H_0=0$, $\lim_{t \rightarrow \infty}H_t=\infty$, and $-\log(\IE[\exp(-H_1)])=1$, which is strongly infinitely divisible with respect to time, such that $C(\vec{u})=\IP(U_1 \leq u_1,\ldots,U_d \leq u_d)$ for $\vec{u}=(u_1,\ldots,u_d) \in [0,1]^d$, where
\begin{gather}
U_k:=\exp(-X_k),\quad X_k:=\inf\{t>0\,:\,H_t> \epsilon_k\},\quad k=1,\ldots,d,
\label{condiidmodel}
\end{gather}
and $\epsilon_1,\ldots,\epsilon_d$ are independent and identically distributed with standard exponential distribution, independent of $H$. \emph{Infinite divisibility with respect to time} means that for arbitrary $n \in \IN$ the stochastic process $H$ is identical in law with the stochastic process $H^{(1)}_{./n}+\ldots+H^{(n)}_{./n}$ for independent copies $H^{(1)},\ldots,H^{(n)}$ of $H$. Making use of this result, \cite{mai18b} further shows that $C$ is infinitely extendible if and only if there is a pair $(b,\lambda)$ comprising a constant $b \in[0,1]$ and a probability measure $\lambda$ on the set $\F_1$ of distribution functions of non-negative random variables with unit mean such that
\begin{gather}
 C(\vec{u})= \Big(\prod_{k=1}^{d}u_k\Big)^b\,\exp\Big\{- (1-b)\,\int_{\F_1}\,\int_0^{\infty}1-\prod_{k=1}^{d}F\Big( \frac{s}{-\log(u_k)}\Big)\,\mathrm{d}s\,\lambda(\mathrm{d}F) \Big\}.
\label{condiidcopula}
\end{gather}
The pair $(b,\lambda)$ can be used to specify the process $H$ in (\ref{condiidmodel}) as
\begin{gather*}
H_t = b\,t+(1-b)\,\sum_{k \geq 1}-\log\Big\{ G_k\Big( \frac{\eta_1+\ldots+\eta_k}{t}-\Big)\Big\},\quad t \geq 0,
\end{gather*}
where $G_1,G_2,\ldots$ is a sequence of independent and identically distributed random distribution functions drawn from $\gamma$ and $\eta_1,\eta_2,\ldots$ is an independent sequence of independent and identically distributed standard exponential variates. The stochastic model (\ref{condiidmodel}) for infinitely extendible extreme-value copulas is very convenient for applications due to the fact that $U_1,\ldots,U_d$ are independent and identically distributed conditioned on the $\sigma$-algebra generated by $H$. Unfortunately, it is in general unclear whether a given exchangeable extreme-value copula is of the form (\ref{condiidcopula}) and thus admits the convenient stochastic representation (\ref{condiidmodel}).
\par
With this background in mind, the present article finally provides the answer to a natural question about which the authors were pondering for almost a decade. It has served as a fruitful source of inspiration, since we have been able to discover plenty of related results in the meantime while chasing our `Moby Dick', which is: 
\begin{quote}
\textit{Is an exchangeable extreme-value copula always of the form (\ref{condiidcopula}), and thus admits a stochastic representation as in (\ref{condiidmodel})?}
\end{quote}
We are going to answer this question with `no' in this article, for arbitrary dimension $d \geq 2$. Whereas for $d=3$ (and from this easily also for $d \geq 3$) already the remark after Example 1 of \cite{mai18b} shows that not every exchangeable extreme-value copula is of the form (\ref{condiidcopula}), for $d=2$ this question has been open until now. Our strategy of proof provides a structural result of independent interest for exchangeable extreme-value copulas in arbitrary dimension $d \geq 2$: the set of symmetric stable tail dependence functions is a simplex. We determine its extremal boundary and show that extremal elements cannot arise as margins of a higher-dimensional symmetric stable tail dependence function.

\subsection{How could Moby Dick escape us for so long?}
It is well known that infinitely extendible random vectors are necessarily positively associated in some sense. In fact, \cite{shaked77} even calls such random vectors \emph{positive dependent by mixture (PDM)}, which is a third synonym that is found in the literature for ``conditionally iid'' or ``infinitely extendible''. Thus, a popular strategy to prove that a random vector is not infinitely extendible is to show that its components exhibit some sort of negative association, which is successful for many popular families of distributions for which exchangeable but not infinitely extendible examples are known. What makes the investigation of extreme-value copulas more delicate in this regard is the fact that extreme-value copulas always induce non-negative association, see \cite[Theorem 6.7, p.\ 177]{joe97}, which disqualifies the aforementioned strategy of proof. Furthermore, well-known parametric families of exchangeable bivariate extreme-value copulas, like the Gumbel copula, the Galambos copula, or the Cuadras--Aug\'e copula\footnote{Which equals the exchangeable, bivariate Marshall--Olkin copula, see also Example \ref{ex_CA}.}, are indeed of the form (\ref{condiidcopula}).
\subsection{Why is our Moby Dick relevant?}
The property of being infinitely extendible has far-reaching consequences in applications. If a model is not infinitely extendible, it is difficult, or even impossible, to add more components to the existing model while preserving its structure. There are many practical applications where it is natural and required to let the dimension $d$ vary. Consider, for example, an insurance company (resp.\ a bank) that has $d$ insured objects under contract (resp.\ has issued $d$ loans). Here, the quantity $d$ changes frequently in a natural way and it would be inconvenient or impracticable to use a model that is limited to an upper bound for $d$. Moreover, as already mentioned, a conditionally iid structure allows to apply classical limit theorems like the law of large numbers or Glivenko--Cantelli in a conditional version, which can be a valuable tool in applications. 
\par
From a theoretical perspective the extendibility problem is interesting and challenging, as the canonical stochastic model of the random vector in concern might not at all be related to the concept of conditional independence. The stochastic model (\ref{condiidmodel}) is by definition based on a two-step Bayesian simulation: first simulate the latent factor $H$, second simulate an iid sequence drawn from the distribution function $1-\exp(-H)$. In contrast, the canonical stochastic model for arbitrary (not necessarily exchangeable or even infinitely extendible) extreme-value copulas, based on limits of component-wise minima/maxima, does a priori not have any connection to conditional independence.

\subsection{Organization of the article and notations}
Section~\ref{Section_2} is concerned with the algebraic structure of exchangeable $d$-variate extreme-value copulas. The main contribution of Sections~\ref{Section_2.1} and \ref{Section_2.2} is the observation that the associated family of symmetric stable tail dependence functions forms a simplex. In Section~\ref{Sec_impossible} it is shown that certain extremal stable tail dependence functions cannot arise as lower-dimensional margins of higher-dimensional symmetric stable tail dependence functions, thus proving that the notion ``infinitely extendible'' and ``exchangeable'' are not synonyms in the realm of extreme-value copulas. Section~\ref{Section_3} provides a non-trivial and intuitive necessary condition for a bivariate extreme-value copula to be infinitely extendible.
\par
Throughout, we denote equality in distribution by $\stackrel{d}{=}$, the symbol $\sim$ means ``is distributed according to,'' and the acronym \emph{iid} means \underline{i}ndependent and \underline{i}dentically \underline{d}istributed. We say that a probability measure $\mu$ on $\IR^d$ is \emph{conditionally iid} if there is a random vector $\bm{X}=({X}_1,\ldots,{X}_d) \sim \mu$ on some probability space $(\Omega,{\F},{\IP})$ such that ${X}_1,\ldots,{X}_d$ are iid conditioned on some $\sigma$-algebra $\mathcal{T} \subset {\F}$. In this case, we also say that $\bm{X}$ itself and also its distribution function are \emph{conditionally iid}. Conditioned on $\mathcal{T}$, and thus also unconditioned, the limiting process $d \rightarrow \infty$ is clearly viable by the natural product space extension, and thus we may view $({X}_1,\ldots,{X}_d)$ as the first $d$ members of an infinite sequence $\{{X}_k\}_{k \in \IN}$ whose components are iid conditioned on ${\mathcal{T}}$. For this reason, the notion ``conditionally iid'' is sometimes also referred to as ``(infinitely) extendible'' in the literature on exchangeable probability laws, for instance in \cite{spizzichino82}. 

\section{Exchangeable extreme-value models}\label{Section_2}
We analyze the anatomy of exchangeable extreme-value copulas. We shall find that the family of symmetric stable tail dependence functions, associated with the exchangeable subfamily of extreme-value copulas, forms a simplex. This is an interesting finding, as the full class of (not necessarily symmetric) stable tail dependence functions does not share this property. Regarding the organization, in Section \ref{Section_2.1} we first present the simpler bivariate case separately, as the terminologies in the literature for the bivariate and multivariate treatment differ, and also because we pick up some of the specific two-dimensional notations in Section \ref{Section_3} below.

\subsection{Bivariate extreme-value copulas: Characterization and geometry}\label{Section_2.1}
	A convenient analytical description of a bivariate extreme-value copula is available in terms of its so-called \emph{(Pickands) dependence function} $A:[0,1] \rightarrow [1/2,1]$, which satisfies $1\geq A(x) \geq \max\{1-x,x\}$, $A(0)=A(1)=1$, and is convex. We denote the set of all such functions by $\mathfrak{A}$. To wit, $A \in \mathfrak{A}$ if and only if the function 
\begin{align*}
C(u_1,u_2) &= (u_1\,u_2)^{A\big( \frac{\log(u_1)}{\log(u_1\,u_2)}\big)},\quad u_1,u_2 \in [0,1],
\end{align*}
is a bivariate extreme-value copula. Furthermore, $C$ is exchangeable if and only if $A(x)=A(1-x)$ for all $x \in [0,1]$, and we denote the respective subset of $\mathfrak{A}$ by $\mathfrak{A}^X$ in the sequel. Clearly, $2$-margins of some exchangeable $d$-variate extreme-value copula for $d\geq 3$ are exchangeable bivariate extreme-value copulas. The subset of those $A$ corresponding to bivariate extreme-value copulas which arise as $2$-margins of an exchangeable $d$-variate extreme-value copula for arbitrarily large $d \geq 2$ will be denoted $\mathfrak{A}^{\ast}$. Notice that elements in $\mathfrak{A}^{\ast}$ are in particular conditionally iid, and in this article we establish that the inclusion $\mathfrak{A}^{\ast} \subsetneq \mathfrak{A}^X$ is proper. 
\par
Elements in $\mathfrak{A}$ can furthermore be associated uniquely with random variables $Q$ on $[0,1]$ with $\IE[Q]=1/2$. To wit, for such $Q$ the function
\begin{align}
A(x) = 2\,\IE\big[\max\{x\,Q,(1-x)\,(1-Q)\}\big], \quad x \in [0,1],
\label{Pick_biv}
\end{align}
lies in $\mathfrak{A}$, every $A \in \mathfrak{A}$ can be represented by such $Q$, and the law of $Q$ is unique, see \cite{dehaan84,ressel13}. Clearly, $A \in \mathfrak{A}^X$ if and only if the associated random variable $Q$ satisfies $Q \stackrel{d}{=}1-Q$. For historical reasons, the finite measure $2\,\IP\big( (Q,1-Q) \in \mathrm{d}\vec{q}\big)$ on the two-dimensional unit simplex $S_2:=\{\vec{q}=(q_1,q_2) \in [0,1]^2\,:\,q_1+q_2=1\}$ is called \emph{Pickands dependence measure} of $C$, see \cite{Pickands1981}. It is educational to recall that
\begin{align*}
A(x) = 1+\int_0^{x}1-2\,\IP(Q \leq 1-y)\,\mathrm{d}y,
\end{align*}
which follows from straightforward computations, using integration by parts and $\IE[Q]=1/2$. This shows in particular that $A$ is convex with derivative from the right being given by $x \mapsto 1-2\,\IP(Q \leq 1-x)$, so that studying properties of $A$ is tantamount to studying properties of $Q$ (which is tantamount to studying properties of $C$).

\begin{example}[The family $BC_2$] \label{ex_BC2}
For $0 \leq a \leq 1/2 \leq b \leq 1$ we consider a random variable $Q=Q_{a,b}$ with $\IP(Q_{a,b}=a)=(b-1/2)/(b-a)=1-\IP(Q_{a,b}=b)$, with the convention $0/0=1$ in case $a=b=1/2$, and observe that the associated Pickands dependence function is given by
\begin{align*}
A_{a,b}(x) = \max\{a\,x,b\,(1-x)\}+\max\{(1-a)\,x,(1-b)\,(1-x)\}.
\end{align*}
This forms the extreme-value copula family $BC_2$, introduced in \cite{maischerer11}.
\end{example}

\begin{remark}[$\mathfrak{A}$ is not a simplex]
It has been shown in \cite{maischerer11}, and later also in \cite{trutschnig16} with an alternative proof, that $\mathfrak{A}$ equals the convex hull of its extremal boundary $\partial_e \mathfrak{A}=\{A_{a,b} \,:\,0 \leq a \leq 1/2 \leq b \leq 1\}$. Combined with the fact that the law $Q_{a,b}$ associated with $A_{a,b}$ via (\ref{Pick_biv}) has at most two atoms, the nomenclature $BC_2$ is the abbreviation of \underline{b}uilding \underline{c}omponents with at most \underline{$2$} atoms. However, \cite{beran14} provide an example showing that $\mathfrak{A}$ is not a simplex, i.e.\ a representation of $A \in \mathfrak{A}$ as convex combination of elements in $\partial_e \mathfrak{A}$ is not necessarily unique. They point out that
\begin{gather*}
A(x)=\frac{1}{4}\,A_{0,1}(x)+\frac{3}{4}\,A_{\frac{1}{3},\frac{2}{3}}(x) = \frac{1}{2}\,A_{0,\frac{2}{3}}(x)+\frac{1}{2}\,A_{\frac{1}{3},1}(x),
\end{gather*}
as can readily be checked. The situation changes when restricting one's attention to the symmetric subfamily, as we will see.
\end{remark}

Obviously, $A_{a,b}$ in Example \ref{ex_BC2} is symmetric if and only if $b=1-a$, and for $a \in (1/2,1]$ we introduce the auxiliary notation $A_{a,1-a} := A_{1-a,a}$ for the sake of convenience in the upcoming computation. If $A \in \mathfrak{A}^X$ is symmetric, the associated random variable $Q $ satisfies $Q \stackrel{d}{=}1-Q$, which implies that
\begin{align*}
A(x) &= 2\,\IE[\max\{x\,Q,(1-x)\,(1-Q)\}]  \\
& = \IE[\max\{x\,Q,(1-x)\,(1-Q)\}+\max\{x\,(1-Q),(1-x)\,Q\}] \\
&= \IE[A_{Q,1-Q}(x)]= \int_{\big[ 0,\frac{1}{2}\big]}A_{q,1-q}(x)\,\nu(\mathrm{d}q),
\end{align*}
where the measure $\nu$ is defined and uniquely determined by the law of $Q$ via $\nu([0,q)):=2\,\IP(Q < q)$ for $q \in [0,1/2)$, or more precisely by
\begin{gather*}
\nu\Big(\Big\{\frac{1}{2}\Big\}\Big)=\IP\Big(Q=\frac{1}{2}\Big),\quad \nu(B):=2\,\IP(Q \in B),\quad \forall \,B \subset [0,1/2) \mbox{ Borel sets}.
\end{gather*}
This shows that the set $\mathfrak{A}^X$ of symmetric dependence functions equals the convex hull of $\partial_e \mathfrak{A}^X=\{A_{a,1-a}\,:\,0 \leq a \leq 1/2\}$. Furthermore, since the representing measure $\nu$ is unique (since the law of $Q$ is known to be unique), $\mathfrak{A}^X$ is a simplex, indicating that the algebraic structure in the exchangeable case is a lot nicer than in the non-exchangeable case. We label this remarkable observation a lemma.

\begin{lemma}[Bivariate case: $\mathfrak{A}^X$ is a simplex]\label{lemma_simplex1}
$\mathfrak{A}^X$ is a simplex with extremal boundary $\partial_e \mathfrak{A}^X=\{A_{a,1-a}\,:\,0 \leq a \leq 1/2\}$.
\end{lemma}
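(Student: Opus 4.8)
The plan is to realize $\mathfrak{A}^X$ as an affine homeomorphic copy of the space $\mathcal{P}\big([0,1/2]\big)$ of Borel probability measures on $[0,1/2]$, endowed with the topology of weak convergence, in such a way that the Dirac masses $\delta_a$ get mapped to the functions $A_{a,1-a}$. Since $\mathcal{P}\big([0,1/2]\big)$ is a simplex whose extremal boundary consists exactly of the Dirac masses, transporting this structure along the identification yields both assertions at once.

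First I would set up the map $\Phi:\mathcal{P}\big([0,1/2]\big)\to\mathfrak{A}^X$,
\begin{gather*}
\Phi(\nu)(x):=\int_{[0,1/2]}A_{q,1-q}(x)\,\nu(\mathrm{d}q),\quad x\in[0,1],
\end{gather*}
and verify that it is well defined (each $A_{q,1-q}$ lies in $\mathfrak{A}^X$, and convexity, the bounds $\max\{x,1-x\}\le A(x)\le 1$, the boundary values, and symmetry are all preserved under mixing), affine, and continuous: since every $A\in\mathfrak{A}^X$ has right derivative $x\mapsto 1-2\,\IP(Q\le 1-x)\in[-1,1]$, the set $\mathfrak{A}^X\subset C[0,1]$ is uniformly bounded, equi-Lipschitz, and closed, hence compact, and on it pointwise convergence and uniform convergence coincide, so weak convergence of $\nu$ forces uniform convergence of $\Phi(\nu)$. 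Surjectivity of $\Phi$ is precisely the computation carried out just before the statement of the lemma: starting from $A\in\mathfrak{A}^X$ with associated $Q\stackrel{d}{=}1-Q$, the identity $A=\IE[A_{Q,1-Q}]$ together with the folding of the law of $Q$ onto $[0,1/2]$ (the measure $\nu$ defined there) exhibits $A$ as $\Phi(\nu)$.

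The crux, which I expect to be the main obstacle, is injectivity of $\Phi$, and this is where the uniqueness part of the representation (\ref{Pick_biv}), due to \cite{dehaan84,ressel13}, enters. The key observation is that $\Phi$ factors as $\Phi=R\circ\Psi$, where $\Psi$ sends $\nu$ to the ``unfolded'' law on $[0,1]$ given by $\IP(Q\in B)=\nu(B)/2$ for Borel $B\subset[0,1/2)$, $\IP(Q=1/2)=\nu(\{1/2\})$, and $\IP(Q\in B)=\nu(1-B)/2$ for Borel $B\subset(1/2,1]$ — a routine check shows this is a symmetric probability measure, hence automatically of mean $1/2$ — and $R$ sends a law of $Q$ with mean $1/2$ to its Pickands dependence function via (\ref{Pick_biv}). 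Now $\Psi$ is injective because $\nu$ is recovered from the law of $Q$ through $\nu(B)=2\,\IP(Q\in B)$ on $[0,1/2)$ and $\nu(\{1/2\})=\IP(Q=1/2)$, while $R$ is injective by the quoted uniqueness; hence $\Phi$ is a continuous affine bijection between compact convex sets and therefore an affine homeomorphism.

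It then remains to read off the conclusion: $\mathfrak{A}^X$, being affinely homeomorphic to $\mathcal{P}\big([0,1/2]\big)$, is a simplex, and its extreme points are the images $\Phi(\delta_a)=A_{a,1-a}$ of the extreme points $\delta_a$, $a\in[0,1/2]$; since $a\mapsto A_{a,1-a}$ is injective (its representing measure is $\delta_a$) and $[0,1/2]$ is compact, the set $\{A_{a,1-a}:0\le a\le 1/2\}$ is a closed subset of $C[0,1]$ and equals $\partial_e\mathfrak{A}^X$. If one prefers to bypass the abstract structure of $\mathcal{P}(K)$, the same facts follow by a direct splitting argument: $A=\Phi(\nu)\in\mathfrak{A}^X$ is extremal if and only if its unique representing $\nu$ is a Dirac mass, because a nontrivial decomposition $\nu=t\,\nu_1+(1-t)\,\nu_2$ with $\nu_1\neq\nu_2$ and $t\in(0,1)$ transports, via injectivity of $\Phi$, to a nontrivial decomposition of $A$ in $\mathfrak{A}^X$, and conversely.
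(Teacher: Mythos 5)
Your proposal is correct and follows essentially the same route as the paper: both rest on writing $A=\IE[A_{Q,1-Q}]=\int_{[0,1/2]}A_{q,1-q}\,\nu(\mathrm{d}q)$ with $\nu$ the folded law of the spectral variable $Q$, and on the de Haan--Ressel uniqueness of that law to get uniqueness of the representing measure. Your packaging as an affine homeomorphism onto $\mathcal{P}([0,1/2])$ merely makes explicit (with the added topological checks) what the paper states directly.
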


\subsection{Multivariate extreme-value copulas: Characterization and geometry}\label{Section_2.2}

We generalize Lemma \ref{lemma_simplex1} to the multivariate case. For an extreme-value copula $C$, the function 
\begin{gather*}
\ell(\vec{x}):=-\log\Big( C\big( e^{-x_1},\ldots,e^{-x_d}\big) \Big),\quad \vec{x}=(x_1,\ldots,x_d) \in [0,\infty)^d,
\end{gather*}
is called \emph{stable tail dependence function}. The extreme-value property of $C$ implies that $\ell$ is homogeneous of order one, i.e.\ $\ell(t\,\vec{x})=t\,\ell(\vec{x})$. Clearly, the function $\ell$ uniquely determines its associated extreme-value copula $C$. In fact, due to the homogeneity already the restriction of $\ell$ to the $d$-dimensional unit simplex $S_d:=\{\vec{x} \in [0,1]^d\,:\,x_1+\ldots+x_d=1\}$ determines $C$. As a generalization of the bivariate case the classical result of \cite{Pickands1981} in the multivariate case states that $\ell$ is the stable tail dependence function of some $d$-variate extreme-value copula if and only if there exists a random vector $\vec{Q}=(Q_1,\ldots,Q_d)$, taking values in $S_d$ and satisfying $\IE[Q_k]=1/d$ for each component $k$, such that 
\begin{gather}
\ell(\vec{x})=d\,\IE[\max\{x_1\,Q_1,\ldots,x_d\,Q_d\}]. 
\label{Pickkmult}
\end{gather}
This random vector $\vec{Q}$ is further uniquely determined in distribution, see \cite{dehaan84,ressel13}. In the bivariate case $d=2$ we recall that the (Pickands) dependence function is obtained from the stable tail dependence function as $A(x)=\ell(x,1-x)$. A $d$-dimensional extreme-value copula is exchangeable if and only if $\ell$ is symmetric, meaning that $\ell$ is invariant with respect to permutations of its arguments.
\par
The (convex) set of all stable tail dependence functions is not a simplex, as already highlighted in the bivariate case. But like in the bivariate case we are going to show that the (convex) subset of all symmetric stable tail dependence functions, denoted by $\mathfrak{L}^{X}_d$ henceforth, is a simplex. To this end, we denote by $\mathfrak{P}_d$ the set of permutations on $\{1,\ldots,d\}$ and introduce an equivalence relation on $S_d$: 
\begin{gather*}
\vec{q} \sim \vec{r} \mbox{, if there exists a }\sigma \in \mathfrak{P}_d \mbox{ such that }\vec{q}=(r_{\sigma(1)},\ldots,r_{\sigma(d)}). 
\end{gather*}
The equivalence class of $\vec{q}$ is denoted by $[\vec{q}]$, and we denote the set of equivalence classes by $\faktor{S_d}{\sim}$. Since $S_d$ is compact Hausdorff, and since each equivalence class is a set of at most $d!$ elements, the quotient space $\faktor{S_d}{\sim}$ is also compact Hausdorff. Thus, Radon probability measures on $\faktor{S_d}{\sim}$ are well-defined. In fact, if $q:S_d \rightarrow \faktor{S_d}{\sim}$ denotes the quotient map, then we may define a probability measure on $\faktor{S_d}{\sim}$ from a probability measure $\mu$ on $S_d$ by $[\mu](B):=\mu( q^{-1}(B))$, where $q^{-1}(B)$ is the pre-image of an open set $B \subset \faktor{S_d}{\sim}$. Intuitively, if $\vec{Q}\sim \mu$ then $[\vec{Q}] \sim [\mu]$. Conversely, every probability measure on $\faktor{S_d}{\sim}$ is obtained in such way.
\begin{lemma}[Multivariate case: $\mathfrak{L}^{X}_d$ is a simplex]\label{lemma_simplexgen}
For each $\vec{q} \in S_d$ the function
\begin{gather*}
\ell^{(d)}_{\vec{q}}(\vec{x}) := \frac{1}{(d-1)!}\,\sum_{\sigma \in \mathfrak{P}_d}\max\{x_1\,q_{\sigma(1)},\ldots,x_d\,q_{\sigma(d)}\}
\end{gather*}
is in $\mathfrak{L}^{X}_d$ and depends on $\vec{q}$ only through its equivalence class $[\vec{q}]$, thus we may write $\ell^{(d)}_{\vec{q}}=\ell^{(d)}_{[\vec{q}]}$. Further, $\mathfrak{L}^{X}_d$ is a simplex with extreme points $\{\ell^{(d)}_{[\vec{q}]}\}_{[\vec{q}] \in \faktor{S_d}{\sim}}$. 
\end{lemma}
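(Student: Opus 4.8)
The plan is to reduce everything to the Pickands representation~(\ref{Pickkmult}) together with the uniqueness of its representing measure, exactly as in the passage leading to Lemma~\ref{lemma_simplex1}. First I would check that $\ell^{(d)}_{\vec{q}}\in\mathfrak{L}^X_d$: let $\Sigma$ be a uniformly distributed random permutation on $\mathfrak{P}_d$ and put $\vec{R}:=(q_{\Sigma(1)},\ldots,q_{\Sigma(d)})$. Then $\vec{R}$ takes values in $S_d$, is exchangeable, satisfies $\IE[R_k]=\frac1d\sum_{j=1}^d q_j=\frac1d$, and $d\,\IE[\max\{x_1 R_1,\ldots,x_d R_d\}]=\frac{d}{d!}\sum_{\sigma\in\mathfrak{P}_d}\max\{x_1 q_{\sigma(1)},\ldots,x_d q_{\sigma(d)}\}=\ell^{(d)}_{\vec{q}}(\vec{x})$, so $\ell^{(d)}_{\vec{q}}$ is a stable tail dependence function by~(\ref{Pickkmult}); symmetry follows by replacing $\vec{x}$ with a permuted copy and re-indexing the sum over $\mathfrak{P}_d$. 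An analogous re-indexing (substituting $\sigma\mapsto\tau\circ\sigma$ when $\vec{q}=(r_{\tau(1)},\ldots,r_{\tau(d)})$) shows $\ell^{(d)}_{\vec{q}}=\ell^{(d)}_{\vec{r}}$ whenever $\vec{q}\sim\vec{r}$, which justifies writing $\ell^{(d)}_{[\vec{q}]}$; in fact $\ell^{(d)}_{[\vec{q}]}$ is represented by the exchangeable law which is uniform over the orbit $[\vec{q}]$, so the cited uniqueness makes $[\vec{q}]\mapsto\ell^{(d)}_{[\vec{q}]}$ injective.

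Next, given an arbitrary $\ell\in\mathfrak{L}^X_d$, I would use~(\ref{Pickkmult}) to pick a representing vector $\vec{Q}$ on $S_d$ with $\IE[Q_k]=1/d$ and $\ell(\vec{x})=d\,\IE[\max_k x_k Q_k]$. Since $\ell$ is symmetric, the permuted vector $(Q_{\pi^{-1}(1)},\ldots,Q_{\pi^{-1}(d)})$ is also a representing vector for $\ell$, so uniqueness forces $\vec{Q}$ to be exchangeable. Averaging over $\mathfrak{P}_d$ then yields
\begin{align*}
\ell(\vec{x}) = \frac{1}{(d-1)!}\sum_{\sigma\in\mathfrak{P}_d}\IE\big[\max\{x_1 Q_{\sigma(1)},\ldots,x_d Q_{\sigma(d)}\}\big] = \IE\big[\ell^{(d)}_{\vec{Q}}(\vec{x})\big] = \int_{\faktor{S_d}{\sim}}\ell^{(d)}_{[\vec{q}]}(\vec{x})\,[\IP_{\vec{Q}}](\mathrm{d}[\vec{q}]),
\end{align*}
where $[\IP_{\vec{Q}}]$ is the pushforward of the law of $\vec{Q}$ under the quotient map. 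Conversely, the barycentre of any Radon probability measure on $\faktor{S_d}{\sim}$ lies in $\mathfrak{L}^X_d$, its representing vector being obtained by drawing $[\vec{q}]$ from the mixing measure and then a uniform representative of the orbit.

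Finally, for the simplex assertion I would package the above as follows: the map sending $\ell\in\mathfrak{L}^X_d$ to the law of its Pickands vector $\vec{Q}$ is an affine bijection onto the exchangeable probability laws on $S_d$ (surjectivity and injectivity being precisely~(\ref{Pickkmult}) and the cited uniqueness), and these exchangeable laws are in affine bijection with the Radon probability measures on the compact Hausdorff space $\faktor{S_d}{\sim}$, as recalled just before the statement. Composing, $\ell$ determines the mixing measure $[\IP_{\vec{Q}}]$ uniquely, so the displayed integral representation is unique; transporting through the affine bijection the Bauer-simplex structure of the space of Radon probability measures on $\faktor{S_d}{\sim}$ (whose extreme points are the Dirac masses) shows that $\mathfrak{L}^X_d$ is a simplex whose extreme points are exactly the images $\ell^{(d)}_{[\vec{q}]}$ of the Diracs $\delta_{[\vec{q}]}$. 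The one genuinely delicate step is the passage from ``$\ell$ symmetric'' to ``$\vec{Q}$ exchangeable'': it is short once one carefully distinguishes the permutation acting on $\vec{x}$ from the one acting on $\vec{Q}$, but it is the linchpin that both identifies the correct building blocks and delivers uniqueness.
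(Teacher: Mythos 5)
Your proposal is correct and follows essentially the same route as the paper's proof: represent $\ell^{(d)}_{\vec{q}}$ via a uniformly random permutation, use the uniqueness of the Pickands measure to show that symmetry of $\ell$ forces exchangeability of $\vec{Q}$, average over $\mathfrak{P}_d$ to obtain the integral representation over $\faktor{S_d}{\sim}$, and invoke uniqueness of the mixing measure for the simplex property. Your explicit packaging via the affine bijection with Radon probability measures on $\faktor{S_d}{\sim}$ and the injectivity of $[\vec{q}]\mapsto\ell^{(d)}_{[\vec{q}]}$ only makes slightly more explicit what the paper leaves implicit.
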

\begin{proof}
First, it is easy to see that $\ell^{(d)}_{\vec{q}}$ is the stable tail dependence function associated with the random vector $\vec{Q}=(q_{\Sigma(1)},\ldots,q_{\Sigma(d)})$, where $\Sigma$ is a random variable uniformly distributed on $\mathfrak{P}_d$, i.e.\ with $\IP(\Sigma=\sigma)=1/d!$ for each $\sigma \in \mathfrak{P}_d$. Notice that $\IE[Q_k]=1/d$ for each $k=1,\ldots,d$. Since $\vec{Q}$ is obviously exchangeable, $\ell_{\vec{q}}^{(d)}$ is symmetric. By definition, $\vec{q}$ depends on $\vec{q}$ only through $[\vec{q}]$.
\par 
Next, we observe that $\ell \in \mathfrak{L}^{X}_d$ if and only if the random vector $\vec{Q}$ associated with its unique Pickands measure is exchangeable. While sufficiency is obvious, to see necessity we let $\sigma \in \mathfrak{P}_d$ be arbitrary and consider $\vec{R}:=(Q_{\sigma(1)},\ldots,Q_{\sigma(d)})$. We have to show that $\vec{R} \stackrel{d}{=} \vec{Q}$. To this end, we observe that $\vec{R}$ is also a Pickands measure of some stable tail dependence function $\ell_R$. Using the symmetry of $\ell$, however, we see that
\begin{align*}
\ell(x_{\sigma(1)},\ldots,x_{\sigma(d)})&=\ell(\vec{x})=d\,\IE[\max_{i=1,\ldots,d}\{x_{i}\,Q_i\}]\\
&=d\,\IE[\max_{i=1,\ldots,d}\{x_{\sigma(i)}\,Q_{\sigma(i)}\}] = \ell_R(x_{\sigma(1)},\ldots,x_{\sigma(d)}).
\end{align*}
Thus, $\ell_R=\ell$, which implies $\vec{R} \stackrel{d}{=} \vec{Q}$, as the law of $\vec{Q}$ is uniquely determined by $\ell$.
\par
Finally, let $\ell \in \mathfrak{L}^{X}_d$ be arbitrary. We have just seen that its associated random vector $\vec{Q}$ is exchangeable. Its probability law, denoted $\mu$, is a law on $S_d$. We consider the associated probability law $[\mu]$ on $\faktor{S_d}{\sim}$, which is the law of $[\vec{Q}]$, and see, using exchangeability of $\vec{Q}$ in $(\ast)$ below,
\begin{align*}
\ell(\vec{x}) &=\frac{1}{(d-1)!}\,d!\,\IE[\max_{i=1,\ldots,d}\{x_i\,Q_i\}] \stackrel{(\ast)}{=} \frac{1}{(d-1)!}\,\sum_{\sigma \in \mathfrak{P}_d}\IE[\max_{i=1,\ldots,d}\{x_i\,Q_{\sigma(i)}\} ]\\
& = \IE\big[ \ell_{\vec{Q}}^{(d)}(\vec{x})\big] = \IE\big[ \ell_{[\vec{Q}]}^{(d)}(\vec{x})\big] = \int_{\faktor{S_d}{\sim}}\ell_{[\vec{q}]}^{(d)}(\vec{x})\,[\mu](\mathrm{d}[\vec{q}]).
\end{align*}
Since $\vec{Q}$ is exchangeable and the law of $\vec{Q}$ on $S_d$ is unique, the law of $[\vec{Q}]$ on $\faktor{S_d}{\sim}$ is unique as well, finishing the argument.
\end{proof}


\subsection{Impossibility of embedding the boundary into a higher dimension}\label{Sec_impossible}

We fix $d\geq n+1\geq 3$ in this paragraph. We show that $\ell^{(n)}_{[\vec{q}]}\in \mathfrak{L}^X_n$ cannot be the $n$-margin of any $\ell\in \mathfrak{L}^X_d$ for $\vec{q}\in S_n$ with $0<q_1<\ldots<q_n<1$. This, in turn, shows that not all $n$-variate exchangeable extreme-value copulas arise as $n$-margins of higher-dimensional exchangeable extreme-value copulas. In particular, not all $n$-variate exchangeable extreme-value copulas are conditionally iid.

\begin{lemma}[Impossibility of embedding the boundary]\label{Theorem_impossible}
Let $0<q_1<\ldots<q_n<1$, $n\geq 2$. We claim that $\ell^{(n)}_{[\vec{q}]}\in \mathfrak{L}^X_n$ cannot be an $n$-margin of any $\ell\in \mathfrak{L}^X_d$ for $d\geq n+1$.
\end{lemma}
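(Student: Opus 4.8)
The plan is to argue by contradiction via the Pickands representation~(\ref{Pickkmult}). Suppose $\ell\in\mathfrak{L}^X_d$ with $d\ge n+1$ has $\ell^{(n)}_{[\vec q]}$ as an $n$-margin; since $\ell$ is symmetric we may take the margin over the first $n$ coordinates, i.e.\ $\ell(x_1,\ldots,x_n,0,\ldots,0)$. Let $\vec W=(W_1,\ldots,W_d)$ be the (exchangeable) Pickands random vector of $\ell$, so $\ell(\vec x)=d\,\IE[\max\{x_1W_1,\ldots,x_dW_d\}]$ with $\vec W\in S_d$ and $\IE[W_k]=1/d$. Setting the last $d-n$ arguments to zero, the stable tail dependence function of the $n$-margin is $\vec x\mapsto d\,\IE[\max\{x_1W_1,\ldots,x_nW_n\}]$. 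I would write $R:=W_1+\ldots+W_n$ and, on $\{R>0\}$, $\vec\Theta:=(W_1,\ldots,W_n)/R\in S_n$, and introduce the finite measure $\tilde H(B):=\IE[R\,\mathbbm{1}_{R>0,\ \vec\Theta\in B}]$ on $S_n$, so that $d\,\IE[\max\{x_1W_1,\ldots,x_nW_n\}]=d\int_{S_n}\max\{x_1\theta_1,\ldots,x_n\theta_n\}\,\tilde H(\mathrm d\vec\theta)$. One checks $\tilde H(S_n)=\IE[R]=n/d$ and $\int_{S_n}\theta_i\,\tilde H(\mathrm d\vec\theta)=\IE[W_i]=1/d$, so $\tfrac dn\,\tilde H$ is the law of a Pickands vector of the $n$-margin. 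By hypothesis the $n$-margin is $\ell^{(n)}_{[\vec q]}$, whose Pickands vector --- since $q_1,\ldots,q_n$ are pairwise distinct --- is uniformly distributed on the $n!$ distinct points $P:=\{(q_{\sigma(1)},\ldots,q_{\sigma(n)}):\sigma\in\mathfrak{P}_n\}\subset S_n$. By uniqueness of the Pickands law, $\tfrac dn\,\tilde H$ equals this uniform law, so $\tilde H$ is supported on $P$.

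The second step extracts structure from this support statement. Since $\tilde H$ gives no mass to $S_n\setminus P$, we have $\IE[R\,\mathbbm{1}_{R>0,\ \vec\Theta\notin P}]=0$, and as $R\ge0$ this forces: almost surely, either $W_1=\cdots=W_n=0$, or $(W_1,\ldots,W_n)$ is a positive multiple of a permutation of $\vec q$; in the latter case $W_1,\ldots,W_n$ are all strictly positive because every coordinate of every point of $P$ is $\ge q_1>0$. By exchangeability of $\vec W$ the same dichotomy holds for the subvector indexed by any $n$-element set $A\subseteq\{1,\ldots,d\}$, and simultaneously almost surely (finitely many such $A$). A short argument then upgrades this to: almost surely \emph{all} coordinates of $\vec W$ are strictly positive. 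Indeed, otherwise there would exist $j$ with $W_j=0$ and, since $\sum_kW_k=1$, also $i$ with $W_i>0$; choosing an $n$-element set $A\supseteq\{i,j\}$, which is possible because $d\ge n+1$ and $n\ge2$, contradicts the dichotomy for $A$. Hence almost surely $\vec W\in(0,\infty)^d$, $\sum_kW_k=1$, and every $n$-dimensional subvector of $\vec W$ is a positive multiple of a permutation of $\vec q$.

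It remains to show that \emph{no deterministic} vector $\vec w\in(0,\infty)^d$ with $d\ge n+1$ can have every $n$-dimensional subvector proportional to a permutation of $\vec q$; this contradicts the previous paragraph and finishes the proof. For such a $\vec w$, any two coordinates lie in a common $n$-subvector, which has pairwise distinct entries, so all of $w_1,\ldots,w_d$ are distinct; relabelling, assume $w_1<w_2<\cdots<w_d$. Applying the constraint to $\{1,\ldots,n\}$ and comparing increasingly-sorted versions gives $w_i=c_0q_i$ for $i\le n$ and some $c_0>0$; applying it to $\{2,\ldots,n+1\}$ gives $w_{1+i}=c_1q_i$ for $i\le n$ and some $c_1>0$. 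Equating the two expressions for $w_i$, $2\le i\le n$, yields $c_0q_i=c_1q_{i-1}$, which forces $\vec q$ to be geometric: $q_i=q_1\mu^{i-1}$ with $\mu:=c_1/c_0>1$; it also follows that $w_i=c_0q_1\mu^{i-1}$ for all $i\le n+1$. Finally, applying the constraint to the $n$-element set $\{1,3,4,\ldots,n+1\}$ (which omits the index $2$): its increasingly-sorted subvector $c_0q_1(1,\mu^2,\mu^3,\ldots,\mu^n)$ must be proportional to the sorted $\vec q=q_1(1,\mu,\mu^2,\ldots,\mu^{n-1})$, and comparing the first two coordinates forces $\mu^2=\mu$, i.e.\ $\mu=1$, contradicting $\mu>1$.

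I expect the last paragraph to be the main obstacle. Once $\vec W$ is known to live in $(0,\infty)^d$ with every $n$-subvector ``shaped like $\vec q$'', the direct approach of intersecting two overlapping $n$-sets drowns in case distinctions, and --- decisively --- the geometric progressions $q_i=q_1\mu^{i-1}$ genuinely pass every pairwise-overlap test, so a third, non-nested $n$-set is needed to exclude them. Sorting the coordinates and using precisely the three sets $\{1,\ldots,n\}$, $\{2,\ldots,n+1\}$ and $\{1,3,\ldots,n+1\}$ is the clean way through: the first two force both $\vec q$ and the $n+1$ smallest coordinates of $\vec w$ to be geometric, and the third breaks geometricity. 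A secondary point requiring care is the bookkeeping in the first step showing $\tfrac dn\,\tilde H$ is a legitimate Pickands law --- nonnegative total mass $n/d=\IE[R]$, and normalised first moments $\IE[W_i]=1/d$ --- so that the uniqueness of the Pickands vector \cite{dehaan84,ressel13} can be invoked.
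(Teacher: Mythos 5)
Your proposal is correct, and its first half coincides with the paper's argument: both pass to the Pickands measure of the $n$-margin via the size-biased normalization $(W_1,\ldots,W_n)/R$ weighted by $R$ (your measure $\tfrac{d}{n}\tilde H$ is exactly the paper's reweighted measure $\IQ$), invoke uniqueness of the Pickands law to conclude that this measure is concentrated on the $n!$ permutations of $\vec q$, and deduce that any point in the support of the Pickands vector of $\ell$ must have all coordinates strictly positive and pairwise distinct. The two arguments part ways only at the final contradiction. The paper counts: fixing a support point $\vec r$ with $0<r_1<\cdots<r_d$, the normalized $n$-subvectors of $\vec r$ already produce at least $n!+1$ distinct points in the support of the marginal Pickands measure, because replacing $r_n$ by $r_{n+1}$ in the window $\{1,\ldots,n\}$ yields a point whose largest coordinate, $r_{n+1}/\big(\sum_{j=1}^{n-1}r_j+r_{n+1}\big)$, strictly exceeds the largest coordinate of every permutation of $(r_1,\ldots,r_n)/\sum_{j=1}^{n}r_j$; this contradicts $\big|[\vec q]\big|=n!$. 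You instead exploit the full strength of the constraint that every $n$-subvector be proportional to a permutation of $\vec q$: the overlapping windows $\{1,\ldots,n\}$ and $\{2,\ldots,n+1\}$ force $\vec q$ to be a geometric progression with ratio $\mu>1$, and the non-nested window $\{1,3,\ldots,n+1\}$ forces $\mu^2=\mu$. Both routes are complete; the paper's cardinality count is shorter and needs only one extra subvector, while your rigidity argument buys additional insight, namely that geometric progressions are exactly the configurations surviving all pairwise-overlap tests, which explains why a third window is genuinely needed.
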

\begin{proof}
We compute the $n$-margin of an arbitrary $\ell \in \mathfrak{L}^X_d$ and denote by $\vec{Q}$ the random vector associated with $\ell$ via (\ref{Pickkmult}). In this computation, we abbreviate $S:=\sum_{j=1}^n Q_j$, noticing that $\IP(S>0)>0$ by exchangeability of $\vec{Q}$ and the fact that $Q_1+\ldots+Q_d=1$ almost surely.
\begin{align*}
\ell(x_1,\ldots,x_n,0,\ldots,0)&=d\,\IE\big[\max\{x_1\,Q_1,\ldots, x_n\,Q_n\}\big]\\
&=d\,\IE\big[\I_{\{S>0\}}\,S\,\max\{x_1\,\frac{Q_1}{S},\ldots, x_n\,\frac{Q_n}{S}\} \big]\\
&\stackrel{(a)}{=}d\,\IP(S>0)\,\IE_{\tilde{\IP}}\big[S\,\max\{x_1\,\frac{Q_1}{S},\ldots, x_n\,\frac{Q_n}{S}\} \big]\\
&\stackrel{(b)}{=}n\,\IE_{\IQ}\big[\max\{x_1\,\frac{Q_1}{S},\ldots, x_n\,\frac{Q_n}{S}\} \big].
\end{align*}
Above, in (a) we introduce the new probability measure $\tilde{\IP}(B):=\IP(B|S>0)$ s.t.\ $\IE[\I_{\{S>0\}}\,X]=\IE[X|S>0]\,\IP(S>0)=\IE_{\tilde{\IP}}[X]\,\IP(S>0)$ for arbitrary random variables $X$, and in (b) we introduce the probability measure $\IQ$ defined by $\mathrm{d}\IQ:=\frac{d\,\IP(S>0)\,S}{n}\,\mathrm{d}\tilde{\IP}$. Notice that $\IQ$ has the same null sets as $\tilde{\IP}$, since $\tilde{\IP}(S>0)=1$ and $\IQ$ is indeed a probability measure, since
\begin{gather*}
\IQ(\Omega) = \IE_{\tilde{\IP}}\Big[ \frac{d\,\IP(S>0)\,S}{n}\Big] = \frac{d\,\IP(S>0)}{n}\,\IE_{\tilde{\IP}}[S] = \frac{d}{n}\,\IE[S\,\I_{\{S>0\}}] = \frac{d}{n}\,\IE[S] = 1.
\end{gather*}
Hence, the Pickands measure of an $n$-margin of $\ell$ is given by $n\,\IQ\Big(\big(\frac{Q_1}{S},\ldots,\frac{Q_n}{S}\big)\in\mathrm{d}\vec{x}\Big)$ on $S_n$. We denote by $\mathcal{S}$ the support of this measure, i.e.\ $\mathcal{S}\subset S_n$ is a closed subset and $\vec{s}:=(s_1,\ldots,s_n)\in \mathcal{S}$ if and only if $\IQ\Big(\big(\frac{Q_1}{S},\ldots,\frac{Q_n}{S}\big)\in U\Big)>0$ for an arbitrary neighbourhood $U$ of $\vec{s}$, $\vec{s}\in U\subset S_n$. We know that the support of $\ell_{[\vec{q}]}$ equals $[\vec{q}]\subset S_n$ and we claim that $\mathcal{S}\neq [\vec{q}]$, which implies the main claim. So, we (falsely) assume that $\mathcal{S}=[\vec{q}]$ and show that this implies a contradiction.
\par 
We denote by $\mathcal{R}$ the support on $S_d$ of the Pickands measure of $\ell$, which equals the support of $\vec{Q}$ under $\IP$. Note that by construction, the support of $\vec{Q}$ under $\IP$ and $\IQ$ can at most differ at values of $S_d$ with at least one component equal to zero, which is irrelevant for us in the following. Let $\vec{r}\in\mathcal{R}$ be arbitrary. By exchangeability of $\vec{Q}$, $[\vec{r}]\subset \mathcal{R}$. If some component $r_i$ of $\vec{r}$ is zero, this contradicts $\min_i\{q_i\} \geq q_1>0$, so all components of $\vec{r}$ are positive. If $r_i=r_j$ for some $i\neq j$ then 
\begin{align*}
\IQ\Big(\big(\frac{Q_1}{S},\ldots,\frac{Q_n}{S}\big)\text{ has two identical components }\Big)>0
\end{align*} which contradicts $q_1<\ldots<q_n$ and the assumption $\mathcal{S}=[\vec{q}]$. Thus, we may assume without loss of generality that $0<r_1<\ldots<r_d<1$ and observe
\begin{align*}
\mathcal{S}\supset\Big\{\big(\frac{r_{\sigma(1)}}{\sum_{j=1}^nr_{\sigma(j)}},\ldots, \frac{r_{\sigma(n)}}{\sum_{j=1}^nr_{\sigma(j)}}\big):\sigma\in \mathfrak{P}_d\Big\}=:M.
\end{align*}
Further, we have
\begin{align*}
N:=\Big\{\big(\frac{r_{\sigma(1)}}{\sum_{j=1}^nr_{\sigma(j)}},\ldots, \frac{r_{\sigma(n)}}{\sum_{j=1}^nr_{\sigma(j)}}\big):\sigma\in \mathfrak{P}_n\Big\}\subset M
\end{align*}
and $N$ has cardinality $|N|=n!$. Since $x\mapsto \frac{x}{x+c}$ is strictly increasing in $x>0$ for arbitrary but fixed $c>0$, we see that
\begin{align*}
\frac{r_{n+1}}{(\sum_{j=1}^{n-1}r_j)+r_{n+1}}>\frac{r_{n}}{(\sum_{j=1}^{n-1}r_j)+r_{n}},
\end{align*}
 which shows that 
\begin{align*}
\Big(\frac{r_{1}}{(\sum_{j=1}^{n-1}r_j)+r_{n+1}},\ldots,\frac{r_{n-1}}{(\sum_{j=1}^{n-1}r_j)+r_{n+1}},\frac{r_{n+1}}{(\sum_{j=1}^{n-1}r_j)+r_{n+1}}\Big)\in M\backslash N,
\end{align*}
and, consequently, $|\mathcal{S}|\geq |M|>|N|=n!$, contradicting the assumption $\mathcal{S}=[\vec{q}]$.
\end{proof}

If we write $\mathfrak{L}^X_{n,d}$ for the subset of those stable tail dependence functions in $\mathfrak{L}^X_n$ which arise as $n$-margin of some element in $\mathfrak{L}^X_d$, Lemma \ref{Theorem_impossible} shows for arbitrary $k \in \IN$ that 
\begin{gather*}
\bigcap_{d \geq n} \mathfrak{L}^X_{n,d} \subsetneq \mathfrak{L}^X_{n,n+k} \subsetneq  \mathfrak{L}^X_{n}.
\end{gather*}
Finally, recall that for $n=2$ the set $\bigcap_{d \geq 2} \mathfrak{L}^X_{2,d}$ is essentially equal to $\mathfrak{A}^{\ast}$, since $A \in \mathfrak{A}^{\ast}$ is defined from $\ell \in \bigcap_{d \geq 2} \mathfrak{L}^X_{2,d}$ via $A(x)=\ell(x,1-x)$, $x \in [0,1]$.

\begin{figure}[!ht]
\includegraphics[width=7cm]{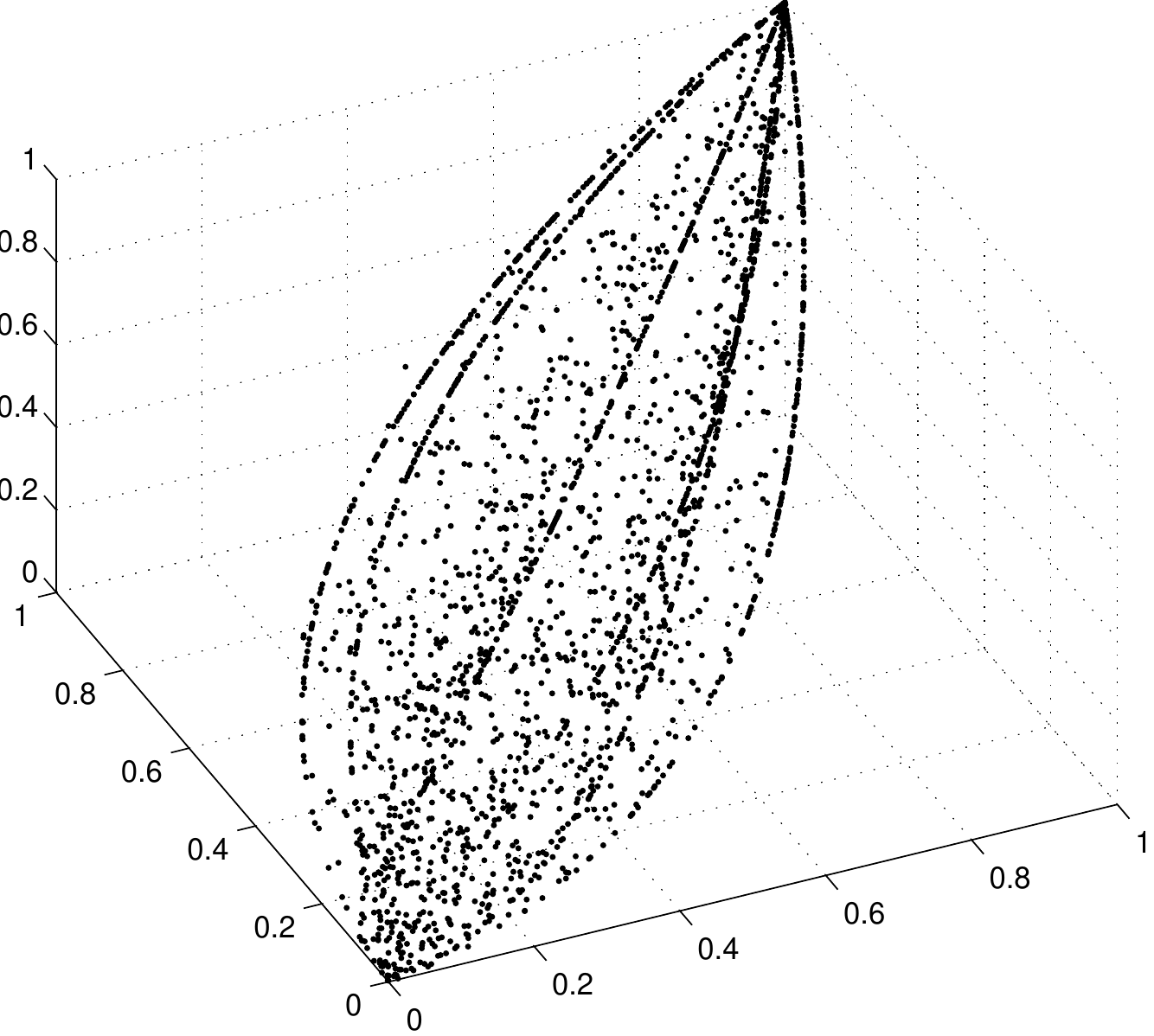}
\includegraphics[width=7cm]{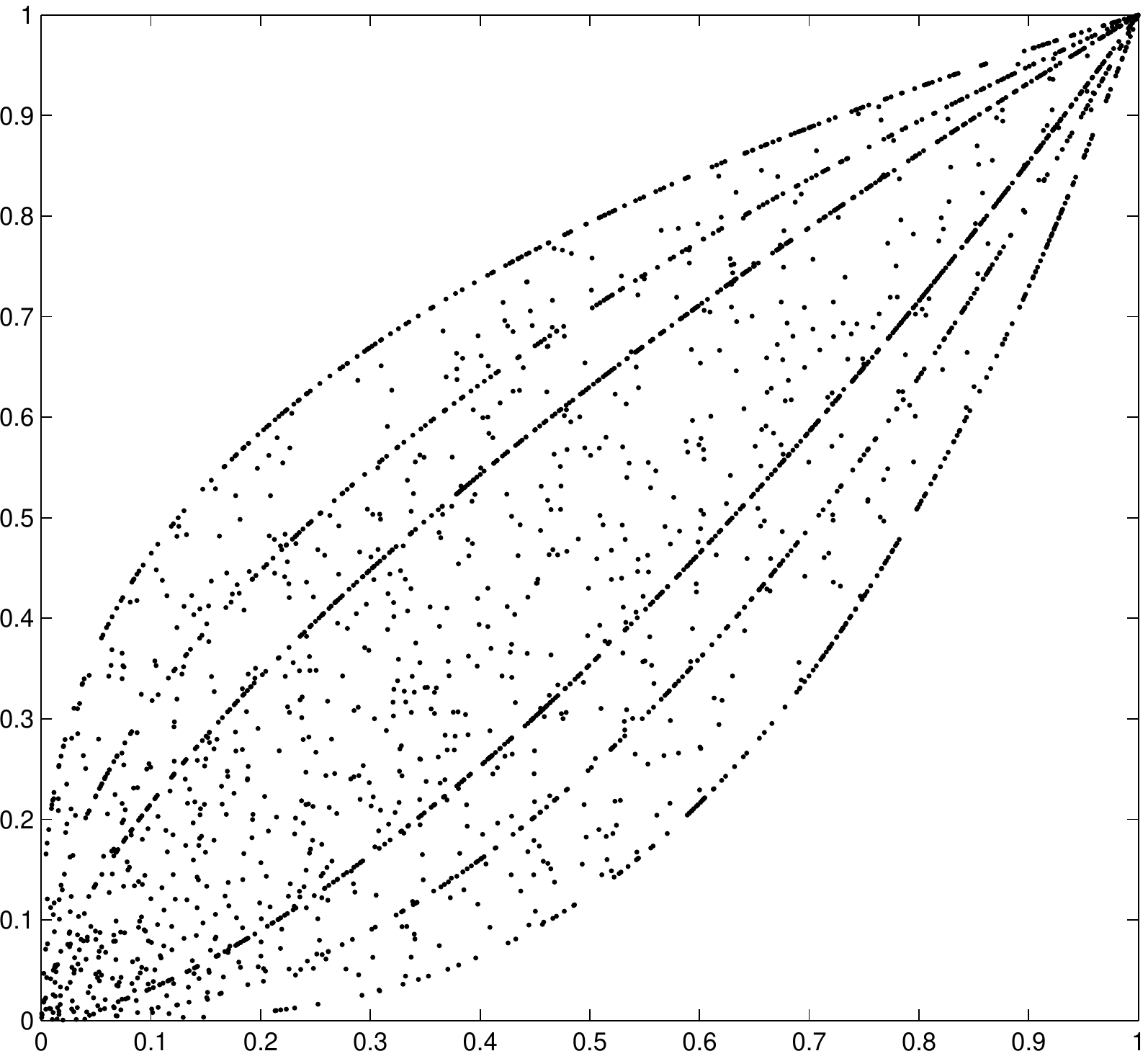}
\caption{Left: Scatter plot of $2,500$ simulated points $(U_1,U_2,U_3)$ from the extreme-value copula associated with $\ell^{(3)}_{[(1/6,1/3,1/2)]}$. Right: Scatter plot of $2,500$ simulated points of the first two coordinates $(U_1,U_2)$ of the three-dimensional points in the left plot. Notice the singular component which is concentrated on six one-dimensional paths from $(0,0)$ to $(1,1)$.}
\label{figextr}
\end{figure}

Figure \ref{figextr} depicts a scatter plot for the extreme-value copula associated with the extremal element $\ell^{(3)}_{[(1/6,1/3,1/2)]} \in \mathfrak{L}^X_3$, as well as a scatter plot of a bivariate margin thereof. We know from \cite{maischerer11} that extremal elements of $\mathfrak{L}^X_2$ induce extreme-value copulas with singular component which is concentrated on (at most) two one-dimensional paths from $(0,0)$ to $(1,1)$. In contrast, the scatter plot on the right-hand side apparently exhibits singular support on six such paths. This confirms our finding that bivariate exchangeable extreme-value copulas which are extendible to a three-variate exchangeable extreme-value copula cannot have Pickands measure concentrated on at most two atoms. 

\section{Infinite extendibility of bivariate extreme-value copulas}\label{Section_3}
Whereas Lemma~\ref{Theorem_impossible} resolves our `Moby Dick' in the negative, this raises the next natural question: Given some $A \in \mathfrak{A}^X$, can we find a useful analytical criterion to solve the membership testing problem $A\in \mathfrak{A}^{\ast}$? In this section, we work towards a solution by deriving a useful necessary condition.
\par
From (\ref{condiidcopula}) we know that $\mathfrak{A}^{\ast}$ equals the convex hull of $\{A_{0,1}\} \cup \{A_F\,:\,F \in \mathfrak{F}_1\}$, where
\begin{gather*}
\mathfrak{F}_1:=\{F:[0,\infty)\rightarrow [0,1]\,:\,F \mbox{ is d.f.\ of a r.v.\ }X \geq 0 \mbox{ with }\IE[X]=1\} 
\end{gather*}
and 
\begin{gather*}
A_F(x) := \int_0^{\infty}1-F\Big( \frac{s}{x}\Big)\,F\Big( \frac{s}{1-x}\Big)\,\mathrm{d}s.
\end{gather*}
The constant Pickands dependence function $A_{0,1}$ represents independence and lies in the closure\footnote{With respect to the topology of pointwise convergence.} of $\{A_F\,:\,F \in \mathfrak{F}_1\}$, which can be seen with the help of Example \ref{ex_CA} below in the case $\theta \rightarrow 0$. Thus, the set $\mathfrak{A}^{\ast}$ is the closed convex hull of $\{A_F\,:\,F \in \mathfrak{F}_1\}$. Consequently, it is of fundamental importance to understand properties of $A_F$ for $F \in \mathfrak{F}_1$. 
\par
Denoting by $Q=Q_F$ the random variable associated with $A_F$ via (\ref{Pick_biv}), we know from \cite{mai18b} that  
\begin{gather*}
Q_F \stackrel{d}{=} \frac{M_F}{X_F+Y_F},\quad \,M_F = \begin{cases}
X_F & \mbox{, if }{B} = 1\\
Y_F & \mbox{, if }{B}=0\\
\end{cases},
\end{gather*}
where the three random variables $B \sim \text{Bernoulli}(1/2)$, $X_F \sim F$, and $Y_F \sim x\,\mathrm{d}F(x)$ are independent. One readily verifies for $0<q < 1/2$ that 
\begin{gather}
\IP(Q_F \leq q) = \frac{1}{2}\,\IP\Bigg( \frac{X_F}{Y_F} \in \Big(\frac{q}{1-q},\frac{1-q}{q} \Big)^c\Bigg), \label{special}
\end{gather}
noticing that $Y_F>0$ almost surely, even though $X_F$ might be zero (so the denominator is well-defined). Notice in particular that (\ref{special}) implies that 
\begin{gather*}
\IP\Big(Q_F = \frac{1}{2}\Big) = 1-2\,\IP\Big( Q_F < \frac{1}{2}\Big)=1-\IP\Big( \frac{X_F}{Y_F} \in \{1\}^c\Big)=\IP\Big( \frac{X_F}{Y_F} =1\Big).
\end{gather*}
Furthermore, the random variable $1/Y_F$ is easily seen to have unit expectation, which implies that $\IE[X_F/Y_F]=1$. 

\begin{example}[An extendible example: The Cuadras--Aug\'e copula]\label{ex_CA}
For $\theta \in (0,1]$ we consider $A(x)=(1-\theta)+\theta\,(1-x)$, $x \leq 1/2$, i.e.\ a convex mixture of independence ($A(x) \equiv 1$) and co-monotonicity ($A(x)=1-x$, $x\leq 1/2$). We have that $A \in \mathfrak{A}^{\ast}$, since both independence and co-monotonicity are conditionally iid. Alternatively, we have that $A=A_F$ for $F(x) = 1-\theta+\theta\,\I_{\{x \geq 1/\theta\}} \in \mathfrak{F}_1$, as will be explained briefly. Using the notation $X_F,Y_F$ from above we observe that $\IP(X_F=0)=1-\theta=1-\IP(X_F=1/\theta)$ and $Y_F \equiv 1$. Consequently, we observe $\IP(X_F/Y_F=0)=\IP(X_F=0)=1-\theta$ and $\IP(X_F/Y_F=1/\theta)=\IP(X_F=1/\theta)=\theta$. In particular, Formula (\ref{special}) implies that
\begin{gather*}
\IP(Q_F \leq q) = \frac{1-\theta}{2},\quad 0<q<\frac{1}{2}.
\end{gather*}
This implies that 
\begin{gather*}
Q_F = \begin{cases}
0 & \mbox{, with probability }\frac{1-\theta}{2}\\
\frac{1}{2} & \mbox{, with probability }\theta\\
1 & \mbox{, with probability }\frac{1-\theta}{2}\\
\end{cases}.
\end{gather*}
Indeed, it is verified that
\begin{gather*}
2\,\IE[\max\{x\,Q_F,(1-x)\,(1-Q_F)\}] = 1-\theta+\theta\,(1-x)=A(x),\quad x \leq \frac{1}{2},
\end{gather*}
as claimed.
\end{example}

The following result gives a non-trivial necessary condition for a symmetric random variable $Q$ on $[0,1]$ to define via (\ref{Pick_biv}) a Pickands dependence function in $\mathfrak{A}^{\ast}$.
\begin{lemma}[Necessary condition for extendibility]\label{lemma_nec}
Let $Q$ be a random variable on $[0,1]$ with $Q \stackrel{d}{=} 1-Q$, and denote by $A(x):=2\,\IE[\max\{x\,Q,(1-x)\,(1-Q)\}]$ its associated Pickands dependence function in $\mathfrak{A}^X$. We assume furthermore that $A \in \mathfrak{A}^{\ast}$.
\begin{itemize}
\item[(a)] \textbf{Discrete case:} If the support of $Q$ consists of finitely many values,
\begin{gather*}
\IP(Q = q) \leq \frac{\IP(Q=1/2)}{2\,\sqrt{q\,(1-q)}},\quad q \in [0,1].
\end{gather*}
\item[(b)] \textbf{Absolutely continuous case:} If $Q$ is absolutely continuous with continuous density $f_Q$, 
\begin{gather*}
f_Q(q) \leq \frac{f_Q(1/2)}{8\,\sqrt{q^3\,(1-q)^3}},\quad q \in (0,1).
\end{gather*}
\end{itemize}
\end{lemma}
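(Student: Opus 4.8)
The plan is to reduce both assertions to the ``building blocks'' $A_F$, $F\in\mathfrak{F}_1$, and there to combine the stochastic representation $Q_F=M_F/(X_F+Y_F)$ with a Cauchy--Schwarz estimate. First, I would record the following representation of the law of $Q$. By Lemma~\ref{lemma_simplex1} the set $\mathfrak{A}^X$ is a compact metrizable simplex with closed extreme boundary, and $\mathfrak{A}^{\ast}$ is the closed convex hull of $\{A_F:F\in\mathfrak{F}_1\}\cup\{A_{0,1}\}$. Hence $A\in\mathfrak{A}^{\ast}$ is the barycentre of a Radon probability measure supported on the compact set $\overline{\{A_F:F\in\mathfrak{F}_1\}}$; passing to the Pickands variables (recall from the discussion before Lemma~\ref{lemma_simplex1} that the law of $Q$ determines $A$ affinely and continuously), this yields a probability measure $\mu$ on $\mathfrak{F}_1$ and a constant $\beta\in[0,1]$ such that the law of $Q$ equals $\beta\,(\tfrac12\delta_0+\tfrac12\delta_1)+(1-\beta)\int_{\mathfrak{F}_1}\mathrm{Law}(Q_F)\,\mu(\mathrm{d}F)$. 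Since atomic masses and Lebesgue densities mix linearly, for $q\in(0,1)$ one has $\IP(Q=q)=(1-\beta)\int\IP(Q_F=q)\,\mu(\mathrm{d}F)$ and $\IP(Q=\tfrac12)=(1-\beta)\int\IP(Q_F=\tfrac12)\,\mu(\mathrm{d}F)$, and analogously with densities in case (b); in case (b) the absence of an atom of $Q$ at $0$ moreover forces $\beta=0$ and $\mu$-a.e.\ $F$ to have atomless law. It therefore suffices to prove both inequalities for $Q=Q_F$ with a single $F$ and then integrate over $\mu$, the cases $q\in\{0,1\}$ in (a) being vacuous under the convention $c/0=+\infty$.

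Next I would derive two identities for a fixed $F\in\mathfrak{F}_1$, writing $R_F:=X_F/Y_F$ (a.s.\ well defined since $Y_F>0$ a.s.) and, for $0<q<1/2$, $t:=q/(1-q)$. From (\ref{special}) (together with the display following it for the value at $1/2$) one reads off
\begin{gather*}
\IP(Q_F=q)=\tfrac12\big[\IP(R_F=t)+\IP(R_F=1/t)\big],\qquad \IP\big(Q_F=\tfrac12\big)=\IP(R_F=1),
\end{gather*}
and, differentiating the distribution function of $Q_F$ in $q$ (using $\mathrm{d}t/\mathrm{d}q=(1-q)^{-2}$ and $\mathrm{d}(1/t)/\mathrm{d}q=-q^{-2}$),
\begin{gather*}
f_{Q_F}(q)=\tfrac12\Big[\tfrac{f_{R_F}(t)}{(1-q)^2}+\tfrac{f_{R_F}(1/t)}{q^2}\Big],\qquad f_{Q_F}\big(\tfrac12\big)=4\,f_{R_F}(1).
\end{gather*}
Everything is thus reduced to comparing $R_F$ at $t$ and at $1/t$ with its value at $1$.

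In the discrete case $\mu$-a.e.\ such $F$ is discrete; writing $\IP(X_F=a_i)=p_i$ with distinct $a_i>0$, one has $\IP(Y_F=a_i)=a_i\,p_i$ because $\sum_i a_ip_i=\IE[X_F]=1$, so $\IP(R_F=1)=\sum_i a_ip_i^2$, and since $a_i=t\,a_j$ forces $\sqrt{a_i}/\sqrt{a_j}=\sqrt{t}$ while for each $j$ there is at most one such $i$,
\begin{gather*}
\IP(R_F=t)=\!\!\sum_{i,j:\,a_i=t a_j}\!\! a_j p_i p_j=\tfrac{1}{\sqrt{t}}\!\!\sum_{i,j:\,a_i=t a_j}\!\!\big(\sqrt{a_i}\,p_i\big)\big(\sqrt{a_j}\,p_j\big)\le\tfrac{1}{\sqrt{t}}\sum_i a_i p_i^2=\tfrac{1}{\sqrt{t}}\,\IP(R_F=1)
\end{gather*}
by Cauchy--Schwarz; replacing $t$ by $1/t$ gives $\IP(R_F=1/t)\le\sqrt{t}\,\IP(R_F=1)$, whence $\IP(Q_F=q)\le\tfrac12(\sqrt{t}+1/\sqrt{t})\,\IP(Q_F=\tfrac12)$, and $\tfrac12(\sqrt{t}+1/\sqrt{t})=1/(2\sqrt{q(1-q)})$ proves (a). In the absolutely continuous case, if $F$ has density $g$ then $Y_F$ has density $x\,g(x)$, so $f_{R_F}(r)=\int_0^{\infty}y^2\,g(y)\,g(ry)\,\mathrm{d}y$, and Cauchy--Schwarz together with the substitution $u=ry$ gives
\begin{gather*}
f_{R_F}(r)\le\Big(\int_0^{\infty}y^2 g(y)^2\,\mathrm{d}y\Big)^{1/2}\Big(\int_0^{\infty}y^2 g(ry)^2\,\mathrm{d}y\Big)^{1/2}=r^{-3/2}\int_0^{\infty}y^2 g(y)^2\,\mathrm{d}y=r^{-3/2}\,f_{R_F}(1).
\end{gather*}
Substituting $r=t$ and $r=1/t$ into the formula for $f_{Q_F}(q)$, using $f_{R_F}(1)=f_{Q_F}(1/2)/4$ and the identity $q^{-3/2}(1-q)^{-1/2}+q^{-1/2}(1-q)^{-3/2}=(q(1-q))^{-3/2}$, yields $f_{Q_F}(q)\le f_{Q_F}(1/2)/(8\sqrt{q^3(1-q)^3})$, which is (b). Integrating the two inequalities against $\mu$ completes the argument.

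The genuinely delicate point I expect is the regularity reduction in case (b): the step above only shows that $\mu$-a.e.\ $F$ has atomless law (which already follows from $Q$ having no atom at $1/2$), whereas the Cauchy--Schwarz bound on $f_{R_F}$ needs the density $g=F'$. I would bridge this by a mollification argument, replacing $F$ by $F_\varepsilon$ obtained from convolution with a smooth compactly supported kernel and rescaled to restore unit mean; then $A_{F_\varepsilon}\to A_F$ pointwise, the bound holds for each $F_\varepsilon$, and one passes to the limit using the assumed continuity of $f_Q$ to identify the limiting density. Two further technical points require care but no new idea, namely justifying the barycentric mixture representation of the first step from the simplex structure (in particular, describing $\overline{\{A_F:F\in\mathfrak{F}_1\}}$), and interchanging the $q$-derivative with the $\mu$-integration in the second step.
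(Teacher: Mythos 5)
Your proposal is correct and follows essentially the same route as the paper: reduce to the building blocks $A_F$, $F\in\mathfrak{F}_1$, via the convex-mixture representation (\ref{condiidcopula}), then apply the Cauchy--Schwarz inequality to the atoms (resp.\ the density) of $X_F/Y_F$, with the same constants emerging from $\sqrt{t}+1/\sqrt{t}=1/\sqrt{q(1-q)}$ and the substitution $u=ry$. The only cosmetic difference is that you bound the two terms $f_{R_F}(t)$ and $f_{R_F}(1/t)$ separately where the paper first collapses them via a symmetry identity, and you are in fact more explicit than the paper about the regularity of the mixture components in case (b).
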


\begin{remark}[Intuition of the necessary conditions]
Intuitively, belonging to $\mathfrak{A}^{\ast}$ requires the probability law of $Q$ to allocate mass near the three values $0,1/2,1$. 
\begin{itemize}
\item The (discrete or continuous) density is bounded from above by a symmetric function, which takes its minimum at $1/2$ and increases monotonically to infinity for $q \rightarrow 0$ and $q \rightarrow 1$. In particular, the less mass one has at $q=1/2$, the more restrictive becomes the bound.
\item Conversely, Lemma \ref{lemma_nec} gives a lower bound on the (discrete or continuous) density value at $q=1/2$, to wit
\begin{align*}
\IP(Q=1/2)& \geq 2\,\sup_{q \in [0,1]}\big\{\IP(Q=q)\,\sqrt{q\,(1-q)}\big\} \geq 0,\\
f_{Q_F}(1/2) & \geq 8\,\sup_{0<q<1}\big\{f_{Q_F}(q)\,\sqrt{q^3\,(1-q)^3}\big\}>0.
\end{align*}
In intuitive terms, this lower bound only becomes trivial (zero or very small), if all probability mass is pushed near the boundaries zero and one, meaning that $A \approx A_{0,1}$. If one wishes to put mass away from these boundaries, necessarily one has to put mass at $1/2$.
\end{itemize}
\end{remark}
\begin{proof}[of Lemma \ref{lemma_nec}]
Since $\mathfrak{A}^{\ast}$ is the closed convex hull of the $A_F$, $F \in \mathfrak{F}_1$, it is sufficient to prove both statements in the special case $Q=Q_F$ for some arbitrary $F \in \mathfrak{F}_1$, as the claimed inequalities remain valid under convex mixtures. We first consider the discrete case. The support of  $Q_F$ is finite if and only if $X_F \sim F$ takes values $0 \leq x_1<x_2<\ldots<x_n$ with respective probabilities $p_1,\ldots,p_n$. The condition $F \in \mathfrak{F}_1$ implies $\sum_{i=1}^{n}p_i=1=\sum_{i=1}^{n}p_i\,x_i$. Furthermore, it is not difficult to verify from Equation~(\ref{special}) that
\begin{gather*}
\IP\Big(Q_F =\frac{x_i}{x_i+x_j}\Big) = \frac{x_i+x_j}{2}\,p_i\,p_j,\quad 1 \leq i,j \leq n. 
\end{gather*}
By symmetry it suffices to study the probability mass of $Q_F$ on $[0,1/2]$. Aggregating the cases $i=j$ implies that $Q_F$ takes the value $1/2$ with non-zero probability $\sum_{i=1}^{n}p_i^2\,x_i$. Furthermore, the potential values of $Q_F$ in $[0,1/2)$ are $x_i/(x_i+x_j)$ with $i<j$, but some of these values might coincide. Fixing one particular $0 \leq q<1/2$, we denote by $(i,j) \leadsto q$ the set of all index pairs $(i,j)$ satisfying $x_i/(x_i+x_j)=q$ (notice that $i<j$ since $q<1/2$), and observe
\begin{align*}
\IP(Q_F =q) = \sum_{(i,j) \leadsto q}\frac{x_i+x_j}{2}\,p_i\,p_j  = \frac{\sum_{(i,j) \leadsto q}\sqrt{x_i}\,p_i\,\sqrt{x_j}\,p_j}{2\,\sqrt{q\,(1-q)}},
\end{align*}
where the last equality uses the fact that $x_i=(x_i+x_j)\,q$ if and only if $(x_i+x_j)^2=x_i\,x_j/(q\,(1-q))$. Using the Cauchy--Schwarz inequality, the sum in the last expression can be estimated from above by $\sum_{i=1}^{n}p_i^2\,x_i=\IP(Q_F=1/2)$, so that we obtain
\begin{gather*}
\IP(Q_F = q) \leq \frac{\IP(Q_F=1/2)}{2\,\sqrt{q\,(1-q)}}, \quad q \in [0,1/2).
\end{gather*}
By symmetry, this inequality clearly also holds for $q \in (1/2,1]$ and for $q=1/2$ it is an equality. 
\par
Next, we consider the continuous case. Clearly, $Q_F$ has a continuous density if and only if $F \in \mathfrak{F}_1$ has a continuous density $f$. The random variable $X_F/Y_F$ has density
\begin{gather}
f_{\frac{X_F}{Y_F}}(z) = \int_0^{\infty}x^2\,f(x\,z)\,f(x)\,\mathrm{d}x,\quad z \in (0,\infty),
\label{denstrans}
\end{gather}
and it follows from Equation~(\ref{special}) that $Q_F$ has (symmetric) density
\begin{gather*}
f_{Q_F}(q)=\frac{1}{2}\,\Big\{ f_{\frac{X_F}{Y_F}}\Big( \frac{q}{1-q}\Big)\,\frac{1}{(1-q)^2}+ f_{\frac{X_F}{Y_F}}\Big( \frac{1-q}{q}\Big)\,\frac{1}{q^2}\Big\},\quad q \in (0,1).
\end{gather*}
It is not difficult to observe from Equation~(\ref{denstrans}) for $q \in (0,1/2)$ that 
\begin{gather*}
\frac{f_{\frac{X_F}{Y_F}}\big(\frac{q}{1-q}\big)}{(1-q)^3} = \frac{f_{\frac{X_F}{Y_F}}\big(\frac{1-q}{q}\big)}{q^3},
\end{gather*}
which implies that
\begin{gather*}
f_{Q_F}(q) = \frac{1}{2\,q^3}\,f_{\frac{X_F}{Y_F}}\Big( \frac{1-q}{q}\Big)=\frac{1}{2}\,\int_0^{\infty}x^2\,f(q\,x)\,f((1-q)\,x)\,\mathrm{d}x,\quad q \in (0,1).
\end{gather*}
Applying the Cauchy--Schwarz-inequality and substitution, we observe a non-trivial restriction on $f_{Q_F}$, to wit
\begin{align*}
f_{Q_F}(q) & \leq \frac{1}{2}\,\sqrt{\int_0^{\infty}x^2\,\big(f(q\,x)\big)^2\,\mathrm{d}x\,\int_0^{\infty}x^2\,\big(f((1-q)\,x)\big)^2\,\mathrm{d}x}\\
& = \frac{f_{Q_F}(1/2)}{8\,\sqrt{q^3\,(1-q)^3}},\quad 0<q<1,
\end{align*}
as claimed.
\end{proof}

\begin{example}[Alternative proof for $A_{q,1-q} \notin \mathfrak{A}^{\ast}$]
Fix $0<q<1/2$ and consider the Pickands dependence function $A_{q,1-q}$ from Example~\ref{ex_BC2}. Recall that Lemma \ref{Theorem_impossible} in the case $n=2$ shows that $A_{q,1-q} \notin \mathfrak{A}^{\ast}$, since $A_{q,1-q}(x)=\ell_{[(q,1-q)]}(x,1-x)$. An alternative proof can be retrieved immediately from the necessary condition in Lemma \ref{lemma_nec}(a). Recall from Example \ref{ex_BC2} that the random vector $Q=Q_{q,1-q}$ associated with $A_{q,1-q}$ has finite support, to wit
\begin{gather*}
\IP(Q= q) = \frac{\frac{1}{2}-q}{1-2\,q} =1-\IP(Q=1-q).
\end{gather*}
Apparently, $\IP(Q=1/2)=0$ and the necessary condition of Lemma \ref{lemma_nec}(a) is violated.
\end{example}

\section{Conclusion}
We have shown that $d$-variate, symmetric stable tail dependence functions form a simplex. Furthermore, it was shown that there are $d$-variate, symmetric stable tail dependence functions which do not arise as $d$-margins of some higher-dimensional symmetric stable tail dependence function. In particular, not every exchangeable extreme-value copula is infinitely extendible, not even in the bivariate case. Moreover, we have provided a useful and intuitive necessary criterion for a bivariate extreme-value copula to be infinitely extendible.

\end{document}